\newtheorem{thm}{Theorem}
\newtheorem{lem}{Lemma}
\theoremstyle{definition}
\newtheorem{defi}{Definition}
\newtheorem{rem}{Remark}
\begin{document}

\title[CW complexes and arithmetic matroids]{Colorings and flows on CW complexes,\\ Tutte quasi-polynomials and arithmetic matroids}
\author{Emanuele Delucchi \and Luca Moci}

\maketitle

\begin{abstract}

In this note we provide a higher-dimensional analogue of Tutte's theorem on colorings and flows of graphs, by showing that the theory of arithmetic Tutte polynomials and quasi-polynomials encompasses invariants defined for CW complexes by Beck--Breuer--Godkin--Martin \cite{BBGM} and Duval--Klivans--Martin  \cite{DKM}.  Furthermore, we answer a question by Bajo--Burdick--Chmutov \cite{BBC}, concerning the modified Tutte--Krushkal--Renhardy polynomials defined by these authors: to this end, we prove that the product of two arithmetic multiplicity functions on a matroid is again an arithmetic multiplicity function.

\end{abstract}


\section{Introduction}

The enumeration of colorings, flows and spanning trees on graphs are classical topics, unified by a two-variable polynomial due to W.\ T.\ Tutte \cite{Tutte}. This polynomial specializes to both the coloring counting and the flow counting functions, and it evaluates to the number of spanning trees. H.\ Crapo extended Tutte's definition 
to arbitrary matroids and since then this {\em Tutte polynomial}
 went on to become one of the most studied matroid polynomial invariants with great theoretical significance and a host of applications --- e.g.,\ in statistics and physics. Recently, this classical setup has been generalized  in two ways.

First, the concept of coloring and flow has been generalized from graphs to higher dimensional objects such as simplicial complexes by Beck and Kemper \cite{BK} and, more generally, to CW complexes by Beck--Breuer--Godkin--Martin \cite{BBGM} and Duval--Klivans--Martin \cite{DKM}. These authors showed, among other things, that the  functions counting the number of colorings and flows with $q$ values on a CW complex is a quasi-polynomial in $q$. In a related vein,  Bajo, Burdick and Chmutov \cite{BBC} introduced a family of \emph{modified TKR polynomials} that connects Kalai's enumeration of weighted cellular spanning trees of complexes \cite{Ka} to a class of polynomials defined by Krushkal and Renhardy \cite{KR} in their study of graph embeddings and to a polynomial defined by Bott \cite{Bo}.

On the other hand, in collaboration with M.~D'Adderio \cite{Md1} and with P.~Br\"and\'en \cite{BM} the second-named author developed a theory of arithmetic matroids as ``matroids decorated with a multiplicity function'', abstracting the arithmetic properties of lists of elements in finitely generated abelian groups. To each arithmetic matroid is naturally associated an {\em arithmetic Tutte polynomial}. These polynomials have been in the focus of recent and lively research, which brought to light manifold connections and a rich structure theory. For instance, arithmetic Tutte polynomials specialize to Poincar\'e polynomials of toric arrangements \cite{M1}, to Ehrhart polynomials of zonotopes \cite{Md3} and to the Hilbert series of some zonotopal spaces \cite{Le}. Moreover, they can be recovered from the Tutte polynomials for group actions on semimatroids \cite{DR}, and they satisfy a convolution formula \cite{BaLe}. 

With a list of elements in a finitely generated abelian group is also associated a \emph{Tutte quasi-polynomial} \cite{BM}, which interpolates between the (ordinary) Tutte polynomial and the arithmetic Tutte polynomial. This quasi-polynomial does not depend only on the arithmetic matroid, but on a finer structure: \emph{a matroid over $\mathbb Z$} in the sense of \cite{FM}.
As pointed out in \cite{BBGM}, the enumerating functions of colorings and flows on a CW complex are not matroidal, and hence cannot be obtained from the ordinary Tutte polynomial. In this paper we show that, however, they are specializations of the Tutte quasi-polynomial, thus providing a higher-dimensional analogy to Tutte's celebrated theorem for graphs \cite{Tutte}.

Moreover, we show that the set of arithmetic matroids over a fixed underlying matroid has a natural structure of commutative monoid. This implies that the modified TKR polynomials are indeed arithmetic Tutte polynomials; in particular, their coefficients are positive. 

In this way we address questions of the authors of \cite{BBC,BBGM,DKM}, who ask whether and how the coloring and flow polynomials for CW complexes and the modified TKR polynomials are related to arithmetic matroids. 

\subsection*{Structure of the paper}
In Section \ref{sec:A} we start off with some preliminaries on incidence algebras and arithmetic matroids. We prove a general theorem about products of integer functions on posets (Theorem \ref{thmnuovo}) and specialize it to one about products of arithmetic multiplicity functions (Theorem \ref{Thm1}).

In Section \ref{sec:B} we discuss how the flow and chromatic quasi-polynomials for CW complexes are indeed specializations of the Tutte quasi-polynomial (Theorem \ref{Thm2}). 

We close with Section \ref{sec:C} where we prove that the modified TKR polynomial is the arithmetic Tutte polynomial of an arithmetic matroid (Theorem \ref{Thm3}).

\subsection*{Acknowledgements} The first-named author has been supported by the Swiss National Foundation Professorship grant PP00P2\_150552/1. We thank Yvonne Kemper for pointing out \cite{BL}, and Fengwei Zhou for finding an error in a previous version of this paper.

\section{On arithmetic matroids}\label{sec:A}
\subsection{Poset theory preliminaries}
The goal of this section is to prove a result on M\"obius functions of posets (short for ``partially ordered sets'') which will serve as a stepping stone towards Theorem \ref{Thm1}. We will assume familiarity with basic terminology of poset theory. We refer the reader unfamiliar with it to \cite{Sta}.

Throughout, we will let $P$ denote a finite poset.\footnote{This will avoid unnecessary technicalities and will suffice for the applications later in the paper, even though most of what we will prove in this section holds in the generality of locally finite posets.} An {\em interval} of $P$ is any subset of $P$ of
the form $[x,y]:=\{z\in P \mid x\leq z \leq y\}$ for some $x,y\in P$, $x\leq
y$. The set of intervals of $P$ is denoted $I(P)$.  

The so-called {\em M\"obius
  function} of $P$ is the function 
  $$\mu: I(P) \to \mathbb Z$$ defined recursively as follows
  $$
  \left\{\begin{array}{ll}
  \displaystyle{\mu(p,p) = 1} & \textrm{for all }p\in P,\\[4pt]
  \displaystyle{\sum_{p_1\leq q \leq p_2} \mu(p_1,q) = 0} & \textrm{for all } p_1 < p_2 \textrm{ in }P, 
  \end{array}\right.
  $$
  where for simplicity we write $\mu(x,y):=\mu([x,y])$.

 The {\em (dual) M\"obius transform}\footnote{We will henceforth simply use the term {\em M\"obius transform}. It is referred to as ``dual form'' in \cite[Proposition 3.7.2]{Sta}} of a function $m: P\to R$ is the function
 \begin{align*}
 m^\mu : P\to & R \\
  p\mapsto & \sum_{q\geq p} \mu(p,q) m(q). 
 \end{align*}
 It is characterized by $m(p) = \sum_{q\geq p} m^{\mu}(q)$.

Consider two elements $p,p' \in P$. If there is an element $x\in P$ with 
$$
\{q\in P\mid q\geq p,\,\, q\geq p'\} = \{q\in P \mid q\geq x\}
$$ 
then $x$ is unique, called the {\em  meet} (or minimal upper bound) of $p$ and $p'$, and denoted by $p\vee p'$. If every pair $p,p'\in P$ admits a meet, the poset $P$ is called a {\em meet semilattice}.

The following lemma should be folklore. We give here a proof for completeness, because we do not know of a reference for it.

\begin{lem}\label{newlemma} Let $P$ be 
a meet-semilattice, and $D: P\to \operatorname{Sets}$ be a function such that $D(p)\cap D(q) = D(p\vee q)$ for all $p,q\in P$.
Then, 
$$\sum_{q\geq p} \mu(p,q) \vert D(q) \vert \geq 0$$ 
for all $p\in P$.
\end{lem}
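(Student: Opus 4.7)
The plan is to exhibit an explicit non-negative integer-valued function $e$ on $P$ whose partial sums $\sum_{q\geq p} e(q)$ recover $|D(p)|$; by the characterization of the M\"obius transform recalled just before the lemma, this forces
$$\sum_{q\geq p}\mu(p,q)|D(q)| \;=\; |D|^\mu(p) \;=\; e(p)\;\geq\; 0.$$

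The first step is to observe that the hypothesis makes $D$ order-reversing: indeed, if $p\leq q$ then $p\vee q = q$, and hence $D(q) = D(p)\cap D(q) \subseteq D(p)$. Next, for each element $x$ lying in the (finite) set $U := \bigcup_{q\in P} D(q)$, I would consider
$$S_x := \{q\in P \mid x\in D(q)\}.$$
By the hypothesis, $S_x$ is closed under the operation $\vee$: whenever $x\in D(q)\cap D(q')$, we have $x\in D(q\vee q')$. Because $P$ is finite and $S_x$ is non-empty, the join of all elements of $S_x$ lies in $S_x$ and is its unique maximum; denote this element $\sigma(x)$. The order-reversing property from the first step then gives the equivalence $x\in D(p) \iff \sigma(x)\geq p$.

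This equivalence partitions $D(p)$ as $D(p) = \bigsqcup_{q\geq p}\sigma^{-1}(q)$, so that setting $e(q) := |\sigma^{-1}(q)|$ yields
$$|D(p)| \;=\; \sum_{q\geq p} e(q) \qquad \text{for every } p\in P.$$
By the uniqueness of the M\"obius transform, this forces $|D|^\mu = e$, which is non-negative by construction.

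The main (modest) obstacle is simply to see that $\sigma(x)$ is well-defined: this relies on closure of $S_x$ under $\vee$ together with the finiteness of $P$, and thus on both pieces of the hypothesis. Once $\sigma$ is in hand, the conclusion is a direct application of M\"obius inversion, requiring no explicit manipulation of $\mu$.
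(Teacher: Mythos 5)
Your proof is correct and is essentially the paper's own argument in slightly different clothing: your map $\sigma$ is the paper's ``unique maximal element $\hat p$ of $P_x$,'' and your $e(q)=|\sigma^{-1}(q)|$ coincides with the paper's $f(q)=|G(q)|$ where $G(q)=D(q)\setminus\bigcup_{q'>q}D(q')$. Both proofs establish the same disjoint decomposition $D(p)=\biguplus_{q\geq p}\sigma^{-1}(q)$ and conclude by M\"obius inversion.
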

\begin{proof} 
Define for all $p\in P$ 
$$
G(p):= D(p) \setminus \bigcup_{q> p} D(q),\quad\quad
f(p):=\vert G(p) \vert \geq 0.
$$
We claim that $$D(p)=\biguplus_{q\geq p} G(q).$$ 
The right-to left inclusion is clear:  
$q \geq p$ means $q=p\vee q$, 
hence  $G(q)\subseteq D(q)=D(p)\cap D(q) \subseteq D(p)$. For the left-to-right inclusion consider $x\in D(p)$. The set $P_x=\{q\in P \mid x\in D(q) \}$ has a unique maximal element $\hat p$ (since $x\in D(q)$ and $x\in D(q')$ imply $x\in D(q\vee q')$ -- hence, $q,q'\in P_x$ imply $q\vee q'\in P_x$). Now we see that $x\in D(\hat p) \setminus \bigcup_{q>\hat p} D(q)=G(\hat p)$. Uniqueness of $\hat p$ implies that the union is indeed disjoint.

Thus, for all $p\in P$ we have $$\vert D(p) \vert = \sum_{q\geq p} f(q)$$
and, by M\"obius inversion,

$$
\sum_{q\geq p}\mu(p,q)\vert D(q) \vert = f(p) \geq 0
$$
as required.
\end{proof}

\begin{thm}\label{thmnuovo}
Let $P$ be a meet-semilattice, and consider two functions $m_1,m_2:P\mapsto Z$. If $(m_1)^\mu(p) \geq 0$ and $(m_1)^\mu(p) \geq 0$ for all $p\in P$, then $(m_1m_2)^\mu (p) \geq 0$ for all $p\in P$.
\end{thm}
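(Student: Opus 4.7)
The plan is to reduce the theorem to Lemma \ref{newlemma} by constructing a suitable set-valued function $D$ on $P$ whose cardinalities realize the pointwise product $m_1 m_2$.

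First, I would use the hypothesis that $(m_1)^\mu(r)$ and $(m_2)^\mu(s)$ are nonnegative integers for every $r,s \in P$ to choose pairwise disjoint finite sets $A(r)$ and $B(s)$ with $|A(r)| = (m_1)^\mu(r)$ and $|B(s)| = (m_2)^\mu(s)$, where all the $A$'s are disjoint from all the $B$'s. This is possible because the values are nonnegative integers, so we can simply pick labels. (Here is the one place the sign hypothesis is used in an essential way.)

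Next, for each $p\in P$ define
$$
D(p) := \bigsqcup_{\substack{r\geq p\\ s\geq p}} A(r)\times B(s).
$$
Using the characterization $m(p)=\sum_{q\geq p} m^\mu(q)$ of the M\"obius transform, one immediately gets
$$
|D(p)| = \Bigl(\sum_{r\geq p}(m_1)^\mu(r)\Bigr)\Bigl(\sum_{s\geq p}(m_2)^\mu(s)\Bigr) = m_1(p)\,m_2(p).
$$
Moreover, a pair $(a,b)\in A(r)\times B(s)$ lies in $D(p)\cap D(p')$ precisely when $r\geq p, r\geq p', s\geq p, s\geq p'$, that is, when $r\geq p\vee p'$ and $s\geq p\vee p'$ (using here that $P$ is a meet-semilattice, so $p\vee p'$ exists). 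Hence $D(p)\cap D(p') = D(p\vee p')$, so $D$ satisfies the hypotheses of Lemma \ref{newlemma}.

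Finally, applying Lemma \ref{newlemma} to $D$ gives
$$
(m_1m_2)^\mu(p) = \sum_{q\geq p}\mu(p,q)\,m_1(q)m_2(q) = \sum_{q\geq p}\mu(p,q)\,|D(q)| \geq 0,
$$
as required. The only step that requires some care is the set-theoretic construction realizing $m_1(p)m_2(p)$ as $|D(p)|$ in a way that is compatible with meets; this is the conceptual heart of the argument, and once it is in place the lemma does the remaining work. I do not anticipate any serious obstacle; the integrality and nonnegativity of $(m_i)^\mu$ together with the meet-semilattice structure are exactly what is needed to make the combinatorial interpretation go through.
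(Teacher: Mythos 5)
Your proposal is correct and is essentially the same argument as the paper's: the paper first builds disjoint formal sets $G_i(q)$ with $|G_i(q)| = m_i^\mu(q)$, sets $A_i(p) = \biguplus_{q\geq p} G_i(q)$, and then applies Lemma \ref{newlemma} to $A_{12}(p) := A_1(p)\times A_2(p)$, which is exactly your $D(p)$ once you unfold the product of disjoint unions. The only cosmetic difference is that the paper verifies $A_{12}(p')\cap A_{12}(p'') = A_{12}(p'\vee p'')$ by first checking it for each $A_i$ and then invoking that Cartesian products commute with intersections, whereas you check the defining property directly on pairs; both routes are sound and of the same length.
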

\begin{proof}
The positivity hypothesis allows us to define, for every $i=1,2$ and $p\in P$, a set 
$$
G_i(p):=\{X_1^{i,p},\ldots ,X_{m_i^\mu(p)}^{i,p}\},
$$
where the $X^{i,p}_j$ are pairwise distinct formal elements -- i.e., 
$X^{i,p}_j = X^{i',p'}_{j'}$ if and only if $i=i'$, $p=p'$, $j=j'$. Then, set
$$
A_i(p):=\biguplus_{q\geq p} G_i(q).
$$
Then, 
$$A_i(p') \cap A_i(p'') = A_i(p'\vee p'')$$
 Notice also that, by definition of $m_i^\mu$,
$$
\vert A_i(p) \vert = \sum_{q\geq p} m_i^\mu(q) = m_i(p).
$$

Consider now the family of sets $(A_{12}(p))_{p\in P}$ defined by
$$
A_{12}(p):= A_1(p) \times A_2(p).
$$

Since cartesian products commute with intersections, for $p',p''\in P$ we have 
$$
A_{12}(p')\cap A_{12}(p'') = 
(A_1(p')\cap A_1(p''))\times (A_2(p')\cap A_2(p'')) = A_{12}(p'\vee p'')
$$
and thus, by Lemma \ref{newlemma}, $$
\sum_{q\geq p} \mu(p,q) \vert A_{12}(q) \vert \geq 0.
$$
The claim now follows because $\vert A_{12}(q) \vert = m_1(q)m_2(q)$ for all $q\in P$.

\end{proof}

\subsection{Arithmetic matroids}

In this section we recall basic definitions on matroids and arithmetic matroids in order to set some notation, and we prove Theorem \ref{Thm1}. For background on matroid theory we refer, e.g., to Oxley's textbook \cite{Oxley}, while our presentation of arithmetic matroids follows mostly \cite{BM}.
 
\newcommand{\rk}{\operatorname{rk}}
\begin{defi}
  A {\em matroid} is given by a pair $(E,\rk)$, where $E$ is a finite
  set and $\rk : 2^E \to \mathbb N$ is a function such that, for all $X,Y\subseteq E$,
  \begin{itemize}
  \item[(R1)] $\rk(X)\leq \vert X \vert$,
  \item[(R2)] $X\subseteq Y$ implies $\rk(X)\leq \rk(Y)$,
  \item[(R3)] $\rk(X\cup Y) + \rk(X\cap Y) \leq \rk(X) + \rk (Y)$.
  \end{itemize}

  A {\em molecule} in a matroid is a triple $\alpha:=(R,F,T)$ of
  disjoint subsets of $E$ such that, for every $A\subseteq E $ with $R
  \subseteq A \subseteq R\cup F\cup
  T$, 
  $$\rk(A) = \rk(R) + \vert A\cap F \vert.$$
  To the molecule $\alpha$, following e.g.\ \cite{DR}, we associate a poset $$B_{\alpha}=\{(T',F')
  \mid T'\subseteq T, F'\subseteq F\}$$ ordered by $(T',F')\leq (T'',F'')$ if $T'\subseteq T''$, $F'\supseteq
F''$.
\end{defi}

\begin{rem} The poset $B_\alpha$ is bounded, with unique minimal element $(\emptyset, F)$ and unique maximal element $(T,\emptyset)$. Moreover, every interval in $B_\alpha$ (say, $[(F',T'),(F'',T'')]$) is the poset $B_{\alpha'}$
  for another molecule (i.e., $\alpha'=(R\cup F''\cup T', F',T''\setminus T')$).
\end{rem}

Given any function $m: 2^E \to \mathbb Z$ and a molecule $\alpha=(R,F,T)$ of a
matroid over the ground set $E$, we define  $m_\alpha : B_\alpha \to
\mathbb Z$ as the function with
$$
m_\alpha (F',T') := m(R\cup F' \cup T').
$$ 

\begin{defi}
  An {\em arithmetic matroid} is a triple $(E,\rk,m)$ where $(E,\rk)$
  is a matroid, and $m: 2^E \to \mathbb Z$ is a function satisfying
  the following axioms.
  \begin{itemize}
  \item[(P)] For every molecule $\alpha$ of $(E,\rk)$
$$(m_\alpha)^\mu (\hat 0) \geq 0.$$
 \item[(Q)]  For every molecule $\alpha=(R,F,T)$ of $(E,\rk)$
 $$m(R)m(R\cup F\cup T) = m(R\cup F) m(R\cup T).$$
  \item[(A)] For all $A\subseteq E$ and all $e\in E$,
$$
\left(\frac{m(A\cup e)}{m(A)}\right)^{2(\rk(A\cup e) - \rk(A)) -1} \in \mathbb Z.
$$
  \end{itemize}
\end{defi}

\begin{rem}
  Axiom (P) is usually given in a different form. In fact, for a
  molecule $\alpha=(R,F,T)$ we see that the poset $B_\alpha$ is
  boolean and the length of the interval $(B_\alpha)_{\leq (T',F')}$
  is $\vert  T' \vert + \vert F\setminus F' \vert$. Therefore, the
  M\"obius function of $B_{\alpha}$ satisfies 
$$
\mu(\hat 0, (T',F')) = (-1)^{\vert  T' \vert + \vert  F \setminus F' \vert}.
$$ 
If we now expand our form of axiom (P) we get
$$
(m_\alpha)^\mu(\hat 0)=
\sum_{
\substack{
T'\subseteq T\\
F'\subseteq F}
}
\mu(\hat 0, (T',F')) m(R\cup F'\cup T')
$$
$$
=\sum_{
\substack{
T'\subseteq T\\
F'\subseteq F}
}
(-1)^{\vert T' \vert + \vert F \setminus  F' \vert}
m(R\cup F'\cup T')
= (-1)^{\vert T\vert}\sum_{
\substack{
T'\subseteq T\\
F'\subseteq F}
}
(-1)^{\vert T\setminus T' \vert + \vert F\setminus F' \vert}
m(R\cup F'\cup T')
$$
$$
=(-1)^{T}
\sum_{R\subseteq A \subseteq R\cup F\cup T}
(-1)^{\vert (R\cup F\cup T) \setminus A \vert} m(A),
$$
and we recover the formulation given in \cite{BM}.
\end{rem}

\subsection{Product of multiplicity functions}
Consider now a fixed matroid $(E,\rk)$, 
two (possibly different) functions $m', m'' : 2^E \to \mathbb Z$ and
their (pointwise) product $m:= m'm''$

\begin{lem}\label{lem:P}
  If both $m'$ and $m''$ satisfy axiom (P), so does $m=m'm''$.
\end{lem}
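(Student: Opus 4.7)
The plan is to apply Theorem \ref{thmnuovo} to the poset $P = B_\alpha$ attached to an arbitrary molecule $\alpha = (R, F, T)$ of $(E, \rk)$, taking the two input functions to be $m_1 := m'_\alpha$ and $m_2 := m''_\alpha$. Because $m = m'm''$ pointwise on $2^E$, the pointwise product $m'_\alpha \cdot m''_\alpha$ on $B_\alpha$ agrees with $m_\alpha$, so the conclusion $((m'_\alpha)(m''_\alpha))^\mu(\hat 0) \geq 0$ of Theorem \ref{thmnuovo} is exactly axiom (P) for $m$ at $\alpha$. Since $\alpha$ is arbitrary, this will yield the lemma.

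To legitimately invoke Theorem \ref{thmnuovo} I must first check its hypotheses. The Remark preceding this section recalls that $B_\alpha$ is a boolean lattice, hence in particular a meet-semilattice in the paper's sense. The substantive verification is then that $(m'_\alpha)^\mu(p) \geq 0$ for \emph{every} $p \in B_\alpha$, not just for $p = \hat 0$ (the argument for $m''$ is identical). For this I would use the second part of the Remark: the principal filter $[p, \hat 1]$ in $B_\alpha$ is itself the poset $B_{\alpha'}$ of another molecule $\alpha'$ of $(E, \rk)$, and the explicit formula given there for $\alpha'$ makes it transparent that under this identification $m'_\alpha$ restricted to $[p, \hat 1]$ coincides with $m'_{\alpha'}$. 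Consequently $(m'_\alpha)^\mu(p) = (m'_{\alpha'})^\mu(\hat 0_{\alpha'})$, which is nonnegative by axiom (P) for $m'$ applied to the molecule $\alpha'$.

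Putting these pieces together, Theorem \ref{thmnuovo} delivers $(m_\alpha)^\mu(\hat 0) \geq 0$, as desired. The only point that requires mild care — but not, I think, a genuine obstacle — is remembering that Theorem \ref{thmnuovo} demands pointwise nonnegativity of the M\"obius transforms at \emph{all} elements of the semilattice, not merely at $\hat 0$. This is precisely why one needs axiom (P) for $m'$ and $m''$ in its full strength (i.e.\ for the whole family of molecules obtained from $\alpha$ by taking sub-intervals), rather than just for $\alpha$ itself; the Remark's closure property for the class of molecule posets is exactly what lets this cascade work.
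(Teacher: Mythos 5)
Your proposal is correct and follows the same route as the paper: apply Theorem~\ref{thmnuovo} to $B_\alpha$, using the fact that every interval of $B_\alpha$ is again a molecule poset to obtain the nonnegativity of $(m'_\alpha)^\mu$ and $(m''_\alpha)^\mu$ at \emph{all} points, not just $\hat 0$. The paper states this step tersely (``Since every interval of $B_\alpha$ defines a molecule, $m'$ and $m''$ satisfy the conditions of Theorem~\ref{thmnuovo}''), whereas you spell it out; the underlying argument is identical.
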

\begin{proof}
  Suppose $m' $ and $m''$ both satisfy (P) and consider a molecule
  $\alpha$. The poset $B_\alpha$ is boolean, hence in particular a (meet semi-)lattice. Since every interval of $B_\alpha$ defines a molecule, $m'$ and $m''$ satisfy the conditions of Theorem \ref{thmnuovo} on $B_\alpha$. Hence, $(m'm'')^\mu (\hat 0)\geq 0$ 
  
\end{proof}

\begin{thm}\label{Thm1}
  If both $(E,\rk,m')$ and $(E,\rk,m'')$ are arithmetic matroids, then
  $(E,\rk,m'm'')$ is also an arithmetic matroid.
\end{thm}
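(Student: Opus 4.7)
The plan is to verify, one by one, that $m := m'm''$ satisfies axioms (P), (Q), and (A) for the fixed underlying matroid $(E,\rk)$.

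Axiom (P) is already handled: it is precisely the content of Lemma \ref{lem:P}, which rests on Theorem \ref{thmnuovo} applied to the boolean lattice $B_\alpha$ of any molecule $\alpha$.

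For axiom (Q), I would simply exploit the multiplicative form of the axiom. Given a molecule $\alpha=(R,F,T)$, both hypotheses give
\[
m'(R)\,m'(R\cup F\cup T) = m'(R\cup F)\,m'(R\cup T),
\]
\[
m''(R)\,m''(R\cup F\cup T) = m''(R\cup F)\,m''(R\cup T).
\]
Multiplying the two identities and regrouping the factors $m'(X)m''(X)=m(X)$ at each of the four subsets $R$, $R\cup F$, $R\cup T$, $R\cup F\cup T$ yields axiom (Q) for $m$.

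For axiom (A), I note that for any $A\subseteq E$ and $e\in E$ the exponent $2(\rk(A\cup e)-\rk(A))-1$ takes the value $+1$ if $e$ is independent over $A$ and $-1$ otherwise. Thus axiom (A) for $m'$ says that $m'(A\cup e)/m'(A)$ (resp.\ its reciprocal) is an integer, and similarly for $m''$. Since the quotient factors as
\[
\left(\frac{m(A\cup e)}{m(A)}\right)^{\pm 1}
= \left(\frac{m'(A\cup e)}{m'(A)}\right)^{\pm 1}\left(\frac{m''(A\cup e)}{m''(A)}\right)^{\pm 1},
\]
and the product of two integers is an integer, axiom (A) holds for $m$.

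The only genuinely nontrivial step is (P), and the work for it has already been invested in Theorem \ref{thmnuovo} and Lemma \ref{lem:P}; axioms (Q) and (A) are purely formal consequences of the fact that multiplicativity and integrality are preserved under taking products.
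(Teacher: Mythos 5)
Your proof is correct and follows the same route as the paper: invoke Lemma \ref{lem:P} for axiom (P), and observe that (Q) and (A) pass to products directly. The paper dismisses (Q) and (A) as trivial without comment; you have simply spelled out the short verification.
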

\begin{proof}
  The triple $(E,\rk,m'm'')$ satisfies (P) by Lemma \ref{lem:P}, and
  (Q), (A) trivially.
\end{proof}

\begin{rem} This theorem endows the set of arithmetic matroids over a fixed underlying matroid with a natural product, which makes it into a commutative monoid. We leave the investigation of this algebraic structure as an open problem.
\end{rem}

\section{On Tutte quasi-polynomials associated to cell complexes}\label{sec:B}

\subsection{The Tutte quasi-polynomial}
Let $G$ be a finitely generated abelian group, $E$ be a finite set, and $\mathcal L=\{\{g_e: e\in E\}\}$ be a list (multiset) of elements in $G$.
For every $A\subseteq E$ we denote by  $\mathcal L_A$ the sublist $\{\{g_e : e\in A\}\}$, by $\langle\mathcal L_A\rangle$ the subgroup that it generates, and by $G_A:= \operatorname{tor}(G/\langle\mathcal L_A\rangle)$ the torsion subgroup of the quotient $G/\langle\mathcal L_A\rangle$.
In \cite[Section 7]{BM}, the \emph{Tutte quasi-polynomial} of $\mathcal L$ is defined as follows.

$$Q_{\mathcal L}(x,y):=\sum_{A\subseteq E} \frac{|G_A|}{|(x-1)(y-1)G_A|}(x-1)^{\rk E-\rk A}(y-1)^{|A|-\rk A}.$$

 \begin{rem} If for every $A\subseteq E$ the integer $k=(x-1)(y-1)$ is coprime with $|G_A|$, then $kG_A:=\{kg \vert g\in G_A\}$ equals $G_A$ and we get the ordinary \emph{Tutte polynomial} of the matroid of linear dependencies among elements of $\mathcal L$:
$$T_{\mathcal L}(x,y):=\sum_{A\subseteq E} (x-1)^{\rk E-\rk A}(y-1)^{|A|-\rk A}.$$
  On the other hand, when for every $A\subseteq E$ the integer $k$ is a multiple of $|G_A|$ we have that $kG_A$ is trivial and we obtain the \emph{arithmetic Tutte polynomial}:
$$M_{\mathcal L}(x,y):=\sum_{A\subseteq E} {|G_A|}(x-1)^{\rk E-\rk A}(y-1)^{|A|-\rk A}.$$
  
   Therefore $Q_{\mathcal L}(x,y)$ is a quasi-polynomial function that in some sense interpolates between these two polynomials. It appeared as a specialization of a multivariate "Fortuin--Kasteleyn quasi-polynomial". 

\end{rem} 

Now recall the following definitions.

\begin{defi}[{\cite[Section 7]{BM}}] Let $G$, $E$ and $\mathcal L$ be as above.

\begin{enumerate}

\item A \emph{proper $q$-coloring} is an element $c\in \operatorname{Hom}(G, \mathbb Z_q)$ such that $c(g_e)\neq 0$ for all $e\in E$.

\item A \emph{nowhere zero $q$-flow} is a function $\phi: E\longrightarrow \mathbb Z_q \setminus \{0\}$ such that\\ $\sum_{e\in E}\phi(e)g_e=0$ in $G/qG$.

\end{enumerate}

The number of proper $q$-colorings and the number of nowhere zero $q$-flows are denoted by $\chi_{\mathcal L}(q)$ and $\chi^*_{\mathcal L}(q)$ respectively.

\end{defi}

The following statement generalizes a result of \cite{Md2}.

\begin{lem}[{\cite[Theorem 9.1]{BM}}]\label{cf:qp}
$$\chi_{\mathcal L}(q)=(-1)^{\rk E} q^{\rk G-\rk E}Q_{\mathcal L}(1-q, 0)$$

$$\chi^*_{\mathcal L}(q)=(-1)^{|E|-\rk E} \vert\operatorname{tor}(G)\vert^{-1}Q_{\mathcal L}(0, 1-q).$$
\end{lem}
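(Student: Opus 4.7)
The plan is to derive both identities by inclusion-exclusion, parametrizing bad events by subsets of $E$. For the coloring identity, I would fix $A\subseteq E$ and count homomorphisms $c\in\operatorname{Hom}(G,\mathbb Z_q)$ that vanish on every $g_e$ with $e\in A$; these are in bijection with $\operatorname{Hom}(G/\langle\mathcal L_A\rangle,\mathbb Z_q)$. Using $\operatorname{Hom}(H,\mathbb Z_q)\cong H/qH$ for finitely generated abelian $H$, together with $|H/qH|=q^{\rk H}|\operatorname{tor}(H)[q]|$ (where $H[q]$ denotes the $q$-torsion), this yields a count of $q^{\rk G-\rk A}|G_A[q]|$. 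Inclusion-exclusion on $A$ then gives
$$\chi_{\mathcal L}(q)=\sum_{A\subseteq E}(-1)^{|A|}q^{\rk G-\rk A}|G_A[q]|.$$
Finally, $|G_A|/|qG_A|=|G_A[q]|$ (the kernel and cokernel of multiplication by $q$ on a finite abelian group have equal size), so a routine sign-and-exponent manipulation rewrites the sum as $(-1)^{\rk E}q^{\rk G-\rk E}Q_{\mathcal L}(1-q,0)$.

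For the flow identity, I would apply inclusion-exclusion over the subset $S\subseteq E$ on which $\phi$ is \emph{forced} to equal $0$. Setting $B:=E\setminus S$, the inner count is the size of the kernel of the natural map $\psi_B:(\mathbb Z_q)^B\to G/qG$ sending $(\phi_e)_{e\in B}$ to $\sum_{e\in B}\phi_e g_e$; its image is $(\langle\mathcal L_B\rangle+qG)/qG$. To pin down $|\operatorname{im}\psi_B|$, I would apply the snake lemma to the short exact sequence $0\to\langle\mathcal L_B\rangle\to G\to G/\langle\mathcal L_B\rangle\to 0$ tensored with multiplication by $q$, obtaining
$$|(\langle\mathcal L_B\rangle+qG)/qG|=\frac{|G/qG|}{|(G/\langle\mathcal L_B\rangle)/q(G/\langle\mathcal L_B\rangle)|}=q^{\rk B}\frac{|\operatorname{tor}(G)[q]|}{|G_B[q]|}.$$
Hence $|\ker\psi_B|=q^{|B|-\rk B}|G_B[q]|/|\operatorname{tor}(G)[q]|$. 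Summing $(-1)^{|S|}|\ker\psi_B|$ over $S\subseteq E$ and reindexing by $B=E\setminus S$ produces the claimed identity, up to matching signs and the torsion prefactor against the definition of $Q_{\mathcal L}(0,1-q)$.

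The main obstacle is the flow step, both conceptually and computationally. Conceptually, invoking the snake lemma is essential, because the naive guess $\operatorname{im}\psi_B\cong\langle\mathcal L_B\rangle/q\langle\mathcal L_B\rangle$ misses the contribution from the $q$-torsion of $G/\langle\mathcal L_B\rangle$ that enters through the connecting homomorphism. Computationally, matching the outcome to $Q_{\mathcal L}(0,1-q)$ requires tracking three independent parities---$(-1)^{|E\setminus B|}$, $(-1)^{\rk E-\rk B}$, and $(-1)^{|B|-\rk B}$---together with the torsion normalization out front. This last bookkeeping is routine but error-prone, so it is the point at which I would be most careful to fix notation and sanity-check against a small example before writing out the full proof.
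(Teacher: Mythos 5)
This lemma is not proved in the paper: it is quoted from \cite[Theorem 9.1]{BM}, so there is no internal proof to compare against. Your strategy --- inclusion--exclusion over the set of elements forced to vanish, combined with $|\operatorname{Hom}(H,\mathbb Z_q)|=|H/qH|$ for colorings and a cardinality count of $(\langle\mathcal L_B\rangle+qG)/qG$ for flows --- is the natural one and surely close to the argument in \cite{BM}. The coloring half checks out exactly as you describe: inclusion--exclusion gives $\chi_{\mathcal L}(q)=\sum_{A\subseteq E}(-1)^{|A|}q^{\rk G-\rk A}|G_A[q]|$, and since $|G_A|/|qG_A|=|G_A[q]|$ this matches $(-1)^{\rk E}q^{\rk G-\rk E}Q_{\mathcal L}(1-q,0)$ after collecting signs.

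The flow half is where the caution you flagged would have paid off. Your computation (which I have checked and is correct) yields
$$
\chi^*_{\mathcal L}(q)=\frac{1}{|\operatorname{tor}(G)[q]|}\sum_{B\subseteq E}(-1)^{|E|-|B|}q^{|B|-\rk B}\,|G_B[q]|,
$$
whereas expanding $(-1)^{|E|-\rk E}|\operatorname{tor}(G)|^{-1}Q_{\mathcal L}(0,1-q)$ produces the same sum but with $|\operatorname{tor}(G)|$ rather than $|\operatorname{tor}(G)[q]|$ in the denominator. These are unequal when $G$ has torsion and $q$ does not annihilate it; the small sanity check you proposed exposes it already for $G=\mathbb Z/2\mathbb Z$, $\mathcal L=(1)$: for $q$ odd one has $\chi^*_{\mathcal L}(q)=q-1$, but the displayed formula evaluates to $(q-1)/2$. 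So either the prefactor in the lemma as reproduced here should read $|\operatorname{tor}(G)[q]|^{-1}$, or the definition of $q$-flow intended in \cite{BM} differs from the one restated in this paper; in any case the discrepancy is invisible in the paper's applications, where $G=\mathbb Z^{C_{d-1}}$ is free. Finally, a minor simplification: the snake lemma is not needed, since the short exact sequence $0\to(\langle\mathcal L_B\rangle+qG)/qG\to G/qG\to (G/\langle\mathcal L_B\rangle)/q(G/\langle\mathcal L_B\rangle)\to 0$ gives $|\operatorname{im}\psi_B|$ directly without any connecting homomorphism to track.
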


In particular, $\chi_{\mathcal L}(q)$ and $\chi^*_{\mathcal L}(q)$ are quasi-polynomial functions of $q$, called the \emph{chromatic quasi-polynomial} and the \emph{flow quasi-polynomial} respectively.

\subsection{On flows and colorings on CW complexes}

Let $C$ be a CW complex of dimension $d$ and, for every $i=0,1,\dots, d$, let $C_i$ be the set of the $i$-dimensional cells of $C$. The top-dimensional boundary map $\partial: \mathbb Z^{C_{d}}\to \mathbb Z^{C_{d-1}}$ is represented by a matrix with integer entries, that (by a slight abuse of notation) we denote again by $\partial$. By reducing modulo $q$, we get a map $\overline{\partial}: \mathbb Z_q^{C_{d}}\to \mathbb Z_q^{C_{d-1}}$, that we can view as a matrix with coefficients in $\mathbb Z_q$.


\begin{defi}[{cf.\ \cite{BK} and \cite{BBGM}}] Let $C$ and $\partial$ be as above.

\begin{enumerate}
\item[(1')] a \emph{proper $q$-coloring} of $C$ is an element $c\in \mathbb Z_q^{C_{d-1}}$ such that all the entries of the vector $c \overline{\partial}$ are nonzero.

\item[(2')] a \emph{nowhere zero $q$-flow} on $C$ is an element $\phi\in \ker \overline{\partial}$ such that the coordinate $\phi(e)$ is nonzero for every $e\in C_d$.
\end{enumerate}
\end{defi}

The authors of \cite{BK} and \cite{BBGM} prove that the number of proper $q$-colorings and the number of nowhere zero $q$-flows are quasi-polynomial functions, that we will denote by $\chi_{C}(q)$ and $\chi^*_{C}(q)$.

In fact, to the (integer) matrix $\partial$ we can associate a Tutte quasi-polynomial, an arithmetic matroid and an arithmetic Tutte polynomial. 
With the following lemma we address \cite[Remark 3.15]{BBGM} by showing that the coloring- and flow- counting quasi-polynomials of \cite{BBGM} and \cite{DKM} are instances of the coloring- and flow- quasi-polynomials associated to the matrix $\partial$.

\begin{lem}\label{lem:eq}
Definitions (1') and (2') 
agree with 
definitions (1) and (2), when $G=\mathbb Z^{C_{d-1}}$, $E=\{1,2,\ldots |C_{d}|\}$, and $\mathcal{L}=\{\text{columns of }\partial\}$.
\end{lem}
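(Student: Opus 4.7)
The plan is to show that each of the two definitions matches its primed counterpart by unwinding the canonical identifications implicit in the choice of $G$, $E$ and $\mathcal L$. No conceptual input is needed beyond two standard identifications for the free abelian group $G=\mathbb Z^{C_{d-1}}$: the natural isomorphism $\operatorname{Hom}(\mathbb Z^{C_{d-1}},\mathbb Z_q)\cong \mathbb Z_q^{C_{d-1}}$ (evaluate a homomorphism on the standard basis) and the natural isomorphism $G/qG\cong \mathbb Z_q^{C_{d-1}}$ (reduce coordinates modulo $q$).

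For the equivalence of (1) and (1'), I would fix the basis of $G$ given by the cells in $C_{d-1}$, and identify a homomorphism $c\in\operatorname{Hom}(G,\mathbb Z_q)$ with the row vector in $\mathbb Z_q^{C_{d-1}}$ whose entries are its values on that basis. Under this identification, for each $e\in E$ the value $c(g_e)$ is precisely the $e$-th entry of the row vector $c\overline\partial$, since $g_e$ is by definition the $e$-th column of $\partial$ and reduction mod $q$ commutes with taking the pairing. Hence the condition ``$c(g_e)\neq 0$ for all $e\in E$'' from (1) is literally the condition that all entries of $c\overline\partial$ are nonzero, which is (1').

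For the equivalence of (2) and (2'), I would view $\phi:E\to\mathbb Z_q\setminus\{0\}$ as a column vector in $\mathbb Z_q^{E}$ with nowhere-zero entries. The expression $\sum_{e\in E}\phi(e)g_e$ is then exactly $\overline\partial\phi$, computed in $\mathbb Z_q^{C_{d-1}}$; but since $G/qG\cong\mathbb Z_q^{C_{d-1}}$ via coordinate-wise reduction, the equation $\sum_{e\in E}\phi(e)g_e=0$ in $G/qG$ becomes $\overline\partial\phi=0$ in $\mathbb Z_q^{C_{d-1}}$, i.e.\ $\phi\in\ker\overline\partial$, which is (2').

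The ``hard part'' is really just making sure the side-conventions (row vs.\ column, left vs.\ right action of the matrix $\partial$) are consistent; there is no genuine obstacle, since the lemma amounts to observing that the general setup of \cite{BM} applied to the specific abelian group $\mathbb Z^{C_{d-1}}$ and the specific list of elements given by the columns of $\partial$ reproduces the combinatorial definitions used in \cite{BK,BBGM,DKM} verbatim.
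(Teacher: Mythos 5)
Your proof is correct and follows essentially the same route as the paper's: identify $\operatorname{Hom}(\mathbb Z^{C_{d-1}},\mathbb Z_q)$ with $\mathbb Z_q^{C_{d-1}}$ for colorings, and identify $G/qG$ with $\mathbb Z_q^{C_{d-1}}$ for flows, then unwind both sides to the condition that the appropriate matrix-vector product has the required (non)vanishing entries. The only difference is cosmetic: you spell out the row-versus-column conventions a bit more explicitly than the paper does.
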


\begin{proof}
Every $c\in\mathbb Z_q^{C_{d-1}}$ uniquely extends to a homomorphism $\tilde{c}\in \operatorname{Hom} (\mathbb Z^{C_{d-1}}, \mathbb Z_q)$. Then since  $\tilde{c}(g_e)=c\partial$, (1) specializes to (1').
On the other hand, definition (2') is equivalent to saying that $\phi$ is a function $C_d\to\mathbb Z_q\setminus\{0\}$ such that $\partial \phi=0$. This is precisely the specialization of definition (2).
\end{proof}

The following ``higher-dimensional analogue'' of Tutte's theorem about the dichromate \cite{Tutte} 
now follows immediately from  Lemma \ref{lem:eq} and Theorem \ref{cf:qp}.

\begin{thm}\label{Thm2}
With the notations above, we have:
$$\chi_{C}(q)=(-1)^{\rk \partial} q^{|C_{d-1}|-\rk \partial}Q_{\partial}(1-q, 0)$$

$$\chi^*_{C}(q)=(-1)^{|C_{d}|-\rk \partial} Q_{\partial}(0, 1-q).$$
\end{thm}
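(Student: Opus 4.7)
The plan is to combine Lemma \ref{lem:eq} and Lemma \ref{cf:qp} by direct substitution. First, Lemma \ref{lem:eq} identifies the proper $q$-colorings and nowhere zero $q$-flows of $C$ with those of the list $\mathcal{L}$ of columns of $\partial$ in the free abelian group $G=\mathbb{Z}^{C_{d-1}}$, with $E=\{1,\ldots,|C_d|\}$. In particular $\chi_C(q)=\chi_{\mathcal{L}}(q)$ and $\chi^*_C(q)=\chi^*_{\mathcal{L}}(q)$, so it suffices to evaluate the two formulas of Lemma \ref{cf:qp} at this specific choice of data.

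Next I would track the numerical quantities appearing in Lemma \ref{cf:qp}. Since $G=\mathbb{Z}^{C_{d-1}}$ is free of rank $|C_{d-1}|$, we have $\rk G=|C_{d-1}|$ and $|\operatorname{tor}(G)|=1$, so in particular the factor $|\operatorname{tor}(G)|^{-1}$ appearing in the flow formula vanishes. Furthermore, the rank $\rk E$ of the matroid defined by $\mathcal{L}$ in $G$ coincides with the rank of $\partial$ as an integer matrix, and $|E|=|C_d|$ by construction. Plugging these values into the two formulas of Lemma \ref{cf:qp} produces precisely the two displayed equations of Theorem \ref{Thm2}, with the Tutte quasi-polynomial $Q_{\mathcal{L}}$ renamed $Q_\partial$ in accordance with the notation of the previous subsection.

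The main obstacle is essentially absent: the statement is explicitly advertised in the text as following immediately from the two preceding lemmas. The only small bookkeeping check I would perform carefully, to avoid an off-by-one error, is the identification $\rk E = \rk \partial$ — which holds because $\mathcal{L}$ sits inside a free abelian group, so linear dependencies over $\mathbb{Z}$ agree with those over $\mathbb{Q}$ — together with the vanishing of the torsion factor, which likewise relies on the freeness of $G$. Thus the proof is genuinely a routine specialization rather than a separate argument.
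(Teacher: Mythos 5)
Your proof is correct and follows exactly the route the paper intends: the theorem is stated in the text as following immediately from Lemma~\ref{lem:eq} and Lemma~\ref{cf:qp}, and you supply precisely the needed bookkeeping ($\rk G = |C_{d-1}|$, $|\operatorname{tor}(G)|=1$, $\rk E = \rk\partial$, $|E|=|C_d|$). One tiny wording slip: the factor $|\operatorname{tor}(G)|^{-1}$ does not ``vanish'' --- it equals $1$ and hence drops out of the formula --- but your mathematics is right.
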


\begin{rem}\label{rem:cellmat}
As pointed out in \cite{BM}, the Tutte quasi-polynomial is not an invariant of the arithmetic matroid, but is an invariant of the \emph{matroid over $\mathbb{Z}$} associated to the matrix $\partial$. We call this matroid the \emph{cellular matroid} over $\mathbb{Z}$ of $C$.

\end{rem}

\begin{rem}\label{rem:cellmat2}
Underlying matroids of cellular matroids over $\mathbb Z$ (i.e., the matroids defined by the matrices $\partial$) have been studied in their own right. Allowing different generality for the complex $C$ one obtains different interesting classes of matroids. Already in the case where $C$ is a simplicial complex, the matroids obtained this way are strictly more general than graphical matroids \cite{BL}.
\end{rem}

\begin{rem} Given a $d$-dimensional CW-complex $C$, for every $j=0,1,\dots,d-1$ the $j$-skeleton of $C$ is itself a $j$-dimensional CW-complex $C^j$ for which we can carry out all considerations of this section. Thus, $C$ in fact gives rise to a class of arithmetic quasi-polynomials and arithmetic matroids. In the following section we will consider properties of this class as a whole.
\end{rem}

\section{On the modified Tutte-Krushkal-Renhardy polynomial}\label{sec:C}

When considering cell complexes as higher dimensional generalizations of graphs, besides flows and colorings it is natural to enumerate the analogue of  spanning trees. Following Kalai \cite{Ka}, this enumeration is weighted by the cardinality of the torsion of the subcomplexes that are enumerated. This line of thought inspired \cite{BBC}, where the authors introduced a class of polynomials arising as a modification of Krushkal and Renhardy's polynomial invariants of triangulations. This last section is devoted to answering a question of \cite{BBC} which we will state after reviewing some definitions (following \cite{BBC,KR}).

\begin{defi}
We denote by $\mathcal S^j$  the family of all \emph{spanning subcomplexes of dimension $j$}, i.e., of all the subcomplexes $S$ such that 
$C^{j-1}\subseteq S \subseteq C^j$. These are naturally identified with the subsets of $(C^j)_j$, the set of $j$-dimensional cells of the $j$-skeleton of $C$. Let $b_i(S)$ be the $i$-th Betti number of $S$ (i.e., the rank of the homology $H_i(S, \mathbb Z)$), and let $t_i(S)$ be the cardinality of its torsion, $t_i(S):=|\operatorname{tor}(H_i(S, \mathbb Z))|$.
\end{defi}

\begin{rem}\label{rem:am} As has been pointed out e.g.\ in \cite{BBGM}, the function $t_i(S)$ is the multiplicity function of the arithmetic matroid defined by the matrix $\partial^j$. 
\end{rem}

\begin{defi}[{\cite[Definition 3.1]{BBC}}]
The $j$-th Tutte--Krushkal--Renhardy (TKR for short) polynomial of $C$ is defined in \cite{KR} as
$$T^j_C(x,y)=\sum_{S\in\mathcal S^j} (x-1)^{b_{j-1}(S)- b_{j-1}(C)}
(y-1)^{b_{j}(S)}.$$

The ``modified $j$-th Tutte--Krushkal--Renhardy (TKR for short) polynomial''  of $C$ is
$$M^j_C(x,y)=\sum_{S\in\mathcal S^j} t_j^2(S) (x-1)^{b_{j-1}(S)- b_{j-1}(C)}
(y-1)^{b_{j}(S)}.$$

\end{defi}

\begin{rem}
Following \cite[Section 5.4]{DKM2} we note that $T^j_C(1,1)$ is the number of "cellular $j$-spanning trees" of $C$ (according to Definition 2.1 of \cite{BBC}), while $M^j_C(1,1)$ is an invariant introduced by G. Kalai in \cite{Ka}: the number of cellular $j$-spanning trees $S$ of $C$, each counted with multiplicity $t_j^2(S)$. Moreover, if $C$ and $C^*$ are dual cell structures on a sphere $\mathbb S^d$ (according e.g.\ to \cite[Definition 1.2]{BBC}) we have 
$$T^j_ {C^*}(x,y)=T^{d-j}_C(y,x)$$
by \cite{KR}, and
$$M^j_ {C^*}(x,y)=M^{d-j}_C(y,x)$$
by \cite[Theorem 3.4]{BBC}.
\end{rem}

In \cite[Remark 3.3]{BBC} the authors ask whether the multiplicity $t_j^2$  defines an arithmetic matroid. The results established in Section 2 allow us to give a positive answer to this question. 

\begin{thm}\label{Thm3} Let $C$ be a CW-complex of dimension $d$ and, for every $j=1,\ldots,d$ let $\mathcal M_j(C)$ denote the cellular matroid of the $j$-skeleton of $C$ (see Remark \ref{rem:cellmat2}). Then, for every $j$ the pair 
($\mathcal M_j(C)$, $t_j^2$) is an arithmetic matroid, and the modified $j$-th Tutte--Krushkal--Renhardy polynomial $M^j_C(x,y)$ is the associated arithmetic Tutte polynomial. In particular, the coefficients of $M^j_C(x,y)$ are nonnnegative.
\end{thm}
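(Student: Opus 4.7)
The plan is to obtain the theorem by directly combining the two key ingredients already developed in the paper: Remark \ref{rem:am}, which asserts that $t_j$ itself is an arithmetic multiplicity function on $\mathcal{M}_j(C)$, and Theorem \ref{Thm1}, which guarantees that the pointwise product of two arithmetic multiplicity functions on a fixed matroid is again arithmetic.

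The first step is immediate: applying Theorem \ref{Thm1} with $m' = m'' = t_j$ on the matroid $\mathcal{M}_j(C)$ yields that $(\mathcal{M}_j(C), t_j \cdot t_j) = (\mathcal{M}_j(C), t_j^2)$ is an arithmetic matroid, establishing the first assertion.

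The second step is a book-keeping identification. Spanning subcomplexes $S \in \mathcal{S}^j$ are in bijection with subsets $A \subseteq (C^j)_j$, which in turn index the columns of $\partial^j$ --- i.e., the ground set of $\mathcal{M}_j(C)$. Writing $\rk$ for the rank in $\mathcal{M}_j(C)$, one computes via the standard kernel/image identities for the boundary maps:
\begin{align*}
b_j(S) &= \dim\ker(\partial^j|_A) = |A| - \rk A, \\
b_{j-1}(S) - b_{j-1}(C) &= \rk E - \rk A,
\end{align*}
where the last identity holds because $S$ and $C^j$ share the same $(j-1)$-skeleton (so the kernel of $\partial^{j-1}$ is unchanged), and the difference comes entirely from the images of $\partial^j|_A$ vs.\ $\partial^j$. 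Substituting these expressions into the definition of $M^j_C(x,y)$ yields
\[
M^j_C(x,y) = \sum_{A \subseteq E} t_j^2(A)\, (x-1)^{\rk E - \rk A}(y-1)^{|A| - \rk A},
\]
which is precisely the arithmetic Tutte polynomial of $(\mathcal{M}_j(C), t_j^2)$.

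For the nonnegativity of the coefficients of $M^j_C(x,y)$, I would invoke the general fact (established in the foundational references \cite{Md1}, \cite{BM} on arithmetic matroids) that the arithmetic Tutte polynomial of any arithmetic matroid has nonnegative coefficients; this applies to the arithmetic matroid just produced. I do not anticipate a real obstacle in this argument: the algebraic heart of the statement lies in Theorem \ref{Thm1}, which has already been proved, and the remaining verification is a routine unwinding of definitions. The only point requiring a brief check is the equality of Betti numbers with matroid rank quantities, but this is standard and already implicit in Remark \ref{rem:am}.
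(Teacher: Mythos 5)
Your proposal is correct and follows the same approach as the paper: invoke Remark~\ref{rem:am} to see that $(\mathcal M_j(C),t_j)$ is an arithmetic matroid, then apply Theorem~\ref{Thm1} with $m'=m''=t_j$. The paper's own proof is just these two lines; your additional verification that $b_j(S)=|A|-\rk A$ and $b_{j-1}(S)-b_{j-1}(C)=\rk E-\rk A$, identifying $M^j_C$ as the arithmetic Tutte polynomial, simply makes explicit what the paper leaves implicit.
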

\begin{proof}
We first notice that the pair $(\mathcal M_j(C),t_j)$ is an arithmetic matroid (see Remark \ref{rem:am}), then apply Theorem \ref{Thm1} with $m'=m''=t_j$.
\end{proof}

\bibliography{ACWbib}{}
\bibliographystyle{plain}

\end{document}